\documentclass[american]{article}
\usepackage[T1]{fontenc}
\usepackage[utf8]{inputenc}
\usepackage{mathrsfs}
\usepackage{amsmath}
\usepackage{amsthm}
\usepackage{amssymb}

\makeatletter

\newcommand{\noun}[1]{\textsc{#1}}

\usepackage{tikz}
\usetikzlibrary{positioning, shapes.geometric, fit}
\makeatother

\theoremstyle{plain}
\newtheorem{thm}{\protect\theoremname}
\newtheorem{lem}[thm]{\protect\lemmaname}
\theoremstyle{remark}
\newtheorem{claim}[thm]{\protect\claimname}
\theoremstyle{definition}
\newtheorem{example}[thm]{\protect\examplename}
\theoremstyle{remark}
\newtheorem*{rem*}{\protect\remarkname}
\usepackage{babel}
\providecommand{\claimname}{Claim}
\providecommand{\examplename}{Example}
\providecommand{\lemmaname}{Lemma}
\providecommand{\remarkname}{Remark}
\providecommand{\theoremname}{Theorem}

\begin{document}
\global\long\def\frak#1{\mathfrak{#1}}%
\global\long\def\cal#1{\mathcal{#1}}%
\global\long\def\surr#1{{#1}}%
\global\long\def\t#1{\mathrm{#1}}%
\global\long\def\of#1{{\left(#1\right)}}%
\global\long\def\size#1{\left\Vert #1\right\Vert }%
\global\long\def\bof#1{{\left[#1\right]}}%
\global\long\def\lsup#1#2{{}{}^{#1}{#2}}%
\global\long\def\res{{\restriction\,}}%

\global\long\def\super{\supseteq}%
\global\long\def\subnq{\subsetneqq}%
\global\long\def\all{\forall}%
\global\long\def\exi{\exists}%
\global\long\def\sub{\subseteq}%
\global\long\def\empa{\varnothing}%
\global\long\def\sem{\smallsetminus}%
\global\long\def\ity{\infty}%
\global\long\def\emp{\varnothing}%

\global\long\def\bcup{\bigcup}%
\global\long\def\bcap{\bigcap}%
\global\long\def\and{\wedge}%
\global\long\def\orr{\vee}%
\global\long\def\then{\Rightarrow}%
\global\long\def\tm{\times}%
\global\long\def\isom{\cong}%
\global\long\def\ds{\dots}%
\global\long\def\conc{^{\frown}}%
\global\long\def\nin{\notin}%
\global\long\def\mto{\mapsto}%
\global\long\def\toby#1{\stackrel{#1}{\longrightarrow}}%
\global\long\def\cont{\frak c}%
\global\long\def\sym{\bigtriangleup}%
\global\long\def\clb#1{\overline{#1}}%
\global\long\def\cl{\mathrm{cl}}%
\global\long\def\clof#1{\mathrm{cl}\left(#1\right)}%

\global\long\def\brp{\mathbb{R}^{+}}%
\global\long\def\br{\mathbb{R}}%
\global\long\def\bz{\mathbb{Z}}%
\global\long\def\bq{\mathbb{Q}}%
\global\long\def\bc{\mathbb{C}}%
\global\long\def\bn{\mathbb{N}}%
\global\long\def\bg{\mathbb{G}}%
\global\long\def\ba{\mathbb{A}}%
\global\long\def\bd{\mathbb{D}}%
\global\long\def\bp{\mathbb{P}}%
\global\long\def\bu{\mathbb{U}}%
\global\long\def\bl{\mathbb{L}}%
\global\long\def\bs{\mathbb{S}}%
\global\long\def\bk{\mathbb{K}}%

\global\long\def\gw{\omega}%
\global\long\def\ga{\alpha}%
\global\long\def\gb{\beta}%
\global\long\def\gd{\delta}%
\global\long\def\ph{\varphi}%
\global\long\def\gga{\gamma}%
\global\long\def\gt{\theta}%
\global\long\def\gz{\zeta}%
\global\long\def\gk{\kappa}%
\global\long\def\gs{\sigma}%
\global\long\def\gl{\lambda}%
\global\long\def\gx{\xi}%
\global\long\def\gp{\pi}%
\global\long\def\eps{\varepsilon}%
\global\long\def\th{\vartheta}%
\global\long\def\vom{\varpi}%
\global\long\def\vrh{\varrho}%
\global\long\def\brh{\bar{\varrho}}%

\global\long\def\vxi{\varXi}%
\global\long\def\del{\Delta}%
\global\long\def\gam{\Gamma}%
\global\long\def\al{\aleph}%

\global\long\def\Ord{\text{Ord}}%
\global\long\def\dom{\operatorname{dom}}%
\global\long\def\cof{\operatorname{cf}}%
\global\long\def\ran{\operatorname{ran}}%
\global\long\def\im{\operatorname{im}}%
\global\long\def\ord{\text{On}}%

\global\long\def\abs{\left|\,\right|}%
\global\long\def\mr{\diagdown}%
\global\long\def\mc{\diagup}%
\global\long\def\sl{\cal L}%
\global\long\def\sc{\cal C}%

\global\long\def\Bs{\mathbf{\Sigma}}%
\global\long\def\Bp{\mathbf{\Pi}}%

\global\long\def\cc{\Gamma}%
\global\long\def\dd{\{\}}%
\global\long\def\rs#1{{\restriction_{#1}}}%

\global\long\def\abv#1{\left|#1\right|}%
\global\long\def\set#1{\left\{  #1\right\}  }%
\global\long\def\cur#1{\left(#1\right)}%
\global\long\def\cei#1{\left\lceil #1\right\rceil }%
\global\long\def\ang#1{\left\langle #1\right\rangle }%
 
\global\long\def\bra#1{\left[#1\right]}%

\global\long\def\se{\mathscr{E}}%
\global\long\def\sc{\mathscr{C}}%
\global\long\def\sp{\mathscr{P}}%
\global\long\def\sb{\mathscr{B}}%
\global\long\def\sa{\mathscr{A}}%
\global\long\def\ss{\mathscr{S}}%
\global\long\def\sf{\mathscr{F}}%
\global\long\def\sx{\mathscr{X}}%
\global\long\def\sh{\mathscr{H}}%
\global\long\def\su{\mathscr{U}}%
\global\long\def\so{\mathscr{O}}%
\global\long\def\sy{\mathscr{Y}}%
\global\long\def\sm{\mathscr{M}}%
\global\long\def\sn{\mathscr{N}}%
\global\long\def\sd{\mathscr{D}}%
\global\long\def\sl{\mathscr{L}}%
\global\long\def\sg{\mathscr{G}}%
\global\long\def\sk{\mathscr{K}}%
\global\long\def\sv{\mathscr{V}}%
\global\long\def\sw{\mathscr{W}}%
\global\long\def\st{\mathscr{T}}%
\global\long\def\si{\mathscr{I}}%
\global\long\def\sj{\mathscr{J}}%
\global\long\def\sr{\mathscr{R}}%

\global\long\def\df#1{\boldsymbol{#1}}%
\global\long\def\der#1{\overset{\centerdot}{#1}}%
\global\long\def\ocl#1{\left(#1\right]}%
\global\long\def\clo#1{\left[#1\right)}%

\global\long\def\qand{\qquad\text{and}\qquad}%
\global\long\def\ovl#1{\overline{#1}}%
\global\long\def\ome{\Omega}%

\global\long\def\dof#1{\text{dist}\left(#1\right)}%
\global\long\def\pt{\partial}%
\global\long\def\nm#1{\left\Vert #1\right\Vert }%
\global\long\def\nmd{\left\Vert \cdot\right\Vert }%

\global\long\def\ceic#1{\cei{#1}^{\circ}}%
\global\long\def\fork{\varUpsilon}%
\global\long\def\ncup{\biguplus}%
\global\long\def\md{\t -}%
\global\long\def\gka{\varkappa}%

\global\long\def\col#1{\operatorname{col}\left(#1\right)}%

\global\long\def\hei#1{\operatorname{height}\left(#1\right)}%

\global\long\def\tr#1{\operatorname{Tr}\left(#1\right)}%

\global\long\def\fin#1{\operatorname{Fin}\left(#1\right)}%

\global\long\def\abp#1#2{#1\text{-}#2}%

\global\long\def\bt{\Phi}%
\global\long\def\btc{\bt^{\circ}}%
\global\long\def\vt{\varsigma}%
\global\long\def\up{{\uparrow}}%
\global\long\def\down{{\downarrow}}%

\title{Augmentation Lemma for Halin’s Conjecture}
\author{\noun{Jerzy Wojciechowski}\\
West Virginia University\\
Morgantown, WV, U.S.A.}
\maketitle
\begin{abstract}
The longstanding conjecture of Halin characterizing the existence
of normal spanning trees in infinite graphs has been recently proved
by Max Pitz \cite{Pitz}. A critical step in the proof involves the
construction of dominated torsos, whose properties are essential to
the overall proof. In this note, we provide a correction to the proof
of a key property of this construction.
\end{abstract}

\section{Introduction.}

We follow the notation in \cite{Diestel}. A rooted spanning tree
$T$ of a graph $G$ is \emph{normal} if the ends of any edge of $G$
are comparable in the natural tree order of $T$. A graph $G$ has
\emph{countable coloring number} if there is a well-order $\le^{*}$
on $V\of G$ such that every vertex of $G$ has only finitely many
neighbors preceding it in $\le^{*}$. Halin \cite{Halin} conjectured
and Pitz \cite{Pitz} proved the following result.
\begin{thm}
\label{thm1}A connected graph has a normal spanning tree if and only
if every minor of it has countable coloring number.
\end{thm}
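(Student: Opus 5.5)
The forward direction is routine, and I would dispose of it first. If $G$ has a normal spanning tree $T$, then well-order each level of $T$ and take an order $\le^{*}$ that refines the tree order. A vertex $v$ has only finitely many predecessors in the tree order, namely the vertices on its root path. By normality, every neighbour of $v$ is comparable with $v$, so every neighbour of $v$ that precedes it lies on this finite path. Hence $G$ has countable coloring number.

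For minors I would use the known fact that the class of graphs with normal spanning trees is closed under taking connected minors. The proof contracts the branch sets, which are connected, and uses the characterisation of normal spanning trees by countable unions of dispersed sets (Jung's theorem). A disconnected minor is handled componentwise. So every minor of $G$ has countable coloring number.

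For the converse I would follow Pitz's strategy. Assume every minor of $G$ has countable coloring number, and argue by induction on $|G|$.

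\textbf{Countable case.} Countable connected graphs always have normal spanning trees, so there is nothing to prove.

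\textbf{Uncountable case.} Let $\kappa=|G|$. Using the coloring order together with elementary submodels, write $G$ as the union of a continuous increasing chain $(G_{\alpha})_{\alpha<\mathrm{cf}\,\kappa}$ of subgraphs of size less than $\kappa$. Each $G_{\alpha}$ should be chosen so that every component $C$ of $G-G_{\alpha}$ has finite neighbourhood $N(C)$ in $G_{\alpha}$ (or at least a dominated, well-controlled one).

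Next, form the \emph{torso} of $G_{\alpha}$. For each component $C$, add edges, or better, a dominating structure, making $N(C)$ behave as if $C$ had been contracted. The torso is then a minor of $G$, so it satisfies the hypothesis. By induction it has a normal spanning tree, which I would root consistently.

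The key requirement is an \textbf{augmentation lemma}. Given a normal spanning tree $T_{\alpha}$ of the torso of $G_{\alpha}$, I must extend it to a normal spanning tree $T_{\alpha+1}$ of the torso of $G_{\alpha+1}$, with $T_{\alpha}$ as a rooted down-closed subtree. At limit stages I take unions, and the union of a chain of down-closed normal trees is normal. To extend, I would treat each component $D$ of $G_{\alpha+1}-G_{\alpha}$ separately. Its neighbourhood in $G_{\alpha}$ lies on a chain in $T_{\alpha}$, because the torso made it complete or dominated. I attach a normal spanning tree of the torso of $D$, obtained from induction, above the top of that chain. If the neighbourhood is infinite, I attach it above a ray along which $N(D)$ is cofinal. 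This is where dominated torsos are needed: a vertex dominating that ray supplies the attachment point.

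I expect this augmentation step to be the main obstacle. The difficulty is ensuring that the torso of $G_{\alpha+1}$ restricted to $G_{\alpha}$ agrees with the torso of $G_{\alpha}$, and that infinite neighbourhoods really do lie on a single normal ray with a dominating vertex available. I would verify both points from the construction of the dominated torsos.
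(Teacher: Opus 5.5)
Your forward direction is fine, but the converse has a genuine gap, exactly at the step you call the main obstacle. The extension statement you want is this: an NST $T_\alpha$ of the torso of $G_\alpha$ extends to an NST of the torso of $G_{\alpha+1}$ in which $T_\alpha$ is down-closed. That is not the paper's Augmentation Lemma (Lemma~\ref{lem99}), and as formulated it cannot work, for several reasons.

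\begin{itemize}
\item The torso is a contraction minor, so $T_\alpha$ contains contracted vertices $v_D$ and edges that are not in $G$. Once a component $D$ of $G-G_\alpha$ is partly absorbed into $G_{\alpha+1}$, $v_D$ is not even a vertex of the next torso. So $T_\alpha\subseteq T_{\alpha+1}$ fails, and the final union is not a spanning tree of $G$.
\item Your claim that $N(D)$ lies on a chain of $T_\alpha$ is unjustified. For adhesion sets in $\mathscr{A}'$ the torso only adds a star centre $v_D$, and that centre may lie below two incomparable leaves.
\item Distinct components of $G_{\alpha+1}-G_\alpha$ can be linked through a single component of $G-G_{\alpha+1}$. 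This gives an edge, or a path through a new vertex, between them in the next torso. Attaching them on separate branches then destroys the normality you need at the following step.
\item Nothing can be attached ``above a ray'', because every vertex of a normal spanning tree has only finitely many predecessors. Take a ray plus one vertex joined to all of it: the ray is a normal tree with no normal spanning extension keeping it down-closed. So genuine finite adhesion, as in Lemma~\ref{lem98}, is indispensable and cannot be relaxed to ``dominated''.
\item Induction applies to the torso only if it has fewer than $\kappa$ vertices. Contracting each component to its own vertex may create $\kappa$ new vertices. This is why the components in $\mathscr{D}''$ are instead contracted onto vertices of their adhesion set via $\eta$ (conservativity).
\end{itemize}

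The missing idea is that the induction hypothesis should not produce trees to be glued together; it should produce dispersed sets. Jung's theorem says a connected graph has an NST iff its vertex set is a countable union of dispersed sets. The dominated torso $\hat{G}_i$ is a minor of $G$ with $|G_i|<\kappa$ vertices, so by induction it has an NST. Hence $V(G_i)$ is a countable union of sets dispersed in $\hat{G}_i$.

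Faithfulness, which your plan never uses, turns these into sets dispersed in $G$; this is Claim~\ref{claim3}. Only this information, intrinsic to $G$, is then combined with the decomposition of Lemma~\ref{lem98} to obtain an NST of $G$.

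Proving faithfulness is exactly the point of this note (Lemma~\ref{lem421}). Suppose $F$ is a finite separator of $U$ from the $K$-projection of a tendril $S$. One must add $N_G(D)$ not only for each $D\in\mathscr{D}'$ with $v_D\in F$, but also for each $D\in\mathscr{D}''$ from which $S$ steps directly into $F$. Otherwise the separation fails, as Example~\ref{exa4} shows. Without such a mechanism for pulling separations in the torso back to $G$, nothing in your proposal makes the trees obtained from the induction hypothesis compatible.
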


A set $U\sub V\of G$ of vertices of a graph $G$ is \emph{dispersed}
in $G$ if every ray in $G$ can be separated from $U$ be a finite
set of vertices. Given a subgraph $H$ of $G$, a set of vertices
$A\sub V\of H$ of the form $A=N_{G}\of D$ for some component $D$
of $G-H$ is an \emph{adhesion set}. A subgraph $H\sub G$ has \emph{finite
adhesion} if all adhesion sets in $H$ are finite.

The following Decomposition Lemma (Lemma 3.3 in \cite{Pitz}) provides
the backbone of the proof of Theorem \ref{thm1}\@.
\begin{lem}
[Decomposition Lemma]\label{lem98}Let $G$ be a connected graph
of uncountable size $\gk$ with the property that all its minors have
countable coloring number. Then $G$ can be written as a continuous
increasing union $\bigcup_{i\in\gs}G_{i}$ of infinite, $<\gk$-sized
connected induced subgraphs $G_{i}$ of finite adhesion in $G$.
\end{lem}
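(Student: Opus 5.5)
We follow the strategy of Pitz's proof, building the decomposition by closing elementary submodels under a finite-adhesion operation. Fix a well-order $\le^{*}$ of $V\of G$ witnessing countable coloring number, and a continuous increasing chain $\ang{M_{i}:i\in\gs}$ of elementary submodels of a large enough $H\of{\gt}$. We take $\gs=\cof\of{\gk}$. Each $M_{i}$ has size $<\gk$ and contains $G$ and $\le^{*}$, and $V\of G\sub\bcup_{i}M_{i}$. We also arrange that each $M_{i}\cap\gk$ is an ordinal when $\gk$ is regular; when $\gk$ is singular we use sufficiently closed models instead. We set $G_{i}=G\bof{M_{i}\cap V\of G}$. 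Continuity and increasingness are inherited from the chain, and connectedness follows from elementarity. Indeed, any two vertices of $M_{i}$ are joined by a finite path, and elementarity places some such path inside $M_{i}$. Infiniteness is automatic once $\gw\sub M_{0}$.

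The main point is finite adhesion. Let $D$ be a component of $G-G_{i}$ and suppose $N\of D\cap V\of{G_{i}}$ is infinite. First we use the coloring number. Each vertex of $D$ has only finitely many $\le^{*}$-predecessor neighbours, and a vertex $v\in M_{i}$ has its finite set of $\le^{*}$-earlier neighbours inside $M_{i}$. So for a neighbour $u\in D$ of $v$ we must have $v<^{*}u$. For $\gk$ regular one would like $\le^{*}$ to be of order type $\gk$ with $M_{i}\cap V$ an initial segment. In general we instead apply a closure argument: $M_{i}$ is closed under taking, for any finite set $F\in M_{i}$, representatives of every component of $G-F$ that meets certain definable sets.

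The key step will be to obtain a contradiction from infinitely many vertices $v_{0},v_{1},\dots\in M_{i}$ attached to one component $D$ that lies outside $M_{i}$. We would contract $D$, together with a suitable connected piece, to a single vertex. Elementarity then produces copies of this configuration. Iterating builds a minor containing a vertex set $X$ with uncountably many vertices of $X$ each adjacent to infinitely many earlier vertices under any well-order. This is a countable-versus-uncountable bipartite pattern such as $K_{\al_{0},\al_{1}}$, or Pitz's forbidden minors, and it has uncountable coloring number. Concretely, we first take $D$ from a countable elementary submodel $N\ni D$. We then find $\al_{1}$ many pairwise disjoint connected subgraphs outside $M_{i}$, each attached to infinitely many of the $v_{n}$. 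This is possible because the statement "there is a connected subgraph avoiding finite set $F$ and meeting infinitely many $v_{n}$" holds in $H\of{\gt}$, and a transfinite recursion of length $\gw_{1}$ inside $M_{i}$-relative arguments yields disjoint witnesses. Contracting these gives a $K_{\al_{0},\al_{1}}$ minor, which has uncountable coloring number. This contradicts the hypothesis.

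The main obstacle is this last recursion. We must ensure that infinitely many of the countably many $v_{n}$ see each witness, so that a pigeonhole over the countably many choices of infinite subsets can be avoided. The first thing to try is to fix the countable set $\set{v_{n}}$ in a countable model. We then pass to a single infinite subset that is common to uncountably many witnesses, using that there are only $2^{\al_{0}}$ subsets but we may take $>2^{\al_{0}}$ witnesses when $\gk$ is large. Otherwise we fall back on Pitz's dominated torso argument, replacing $G_{i}$ by its closure under adding, for each infinite adhesion, the dominating vertices.
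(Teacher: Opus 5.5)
The note does not prove this lemma; it quotes it as Lemma 3.3 of Pitz. So I judge your sketch on its own terms, and it has a genuine gap at the only non-routine step.

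\textbf{What works, and where it breaks.} The set-up is fine: a continuous chain of elementary submodels $M_i$ of size $<\kappa$ containing $G$ and $\le^{*}$, connectedness of $G[M_i]$ by elementarity, and the observation that each vertex outside $M_i$ has only finitely many neighbours in $M_i$, all $\le^{*}$-below it. But finite adhesion of \emph{every} $G[M_i]$, limit stages included, is the whole content of the lemma. Finite adhesion is not preserved under increasing unions: take a ray plus one vertex joined to all of it; every finite initial path has finite adhesion, but the ray itself does not. Your argument for finite adhesion fails. You claim that ``some connected subgraph avoids a finite set $F$ and meets infinitely many $v_n$'' holds in $H(\theta)$, so that a recursion of length $\omega_1$ yields disjoint witnesses. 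This fails on three counts.
First, $\{v_n:n\in\omega\}$ is a countable subset of $M_i$ but in general not an element of it, so elementarity of $M_i$ says nothing about formulas with this parameter. Making the $M_i$ countably closed is not an option: that property is not preserved by unions of countable cofinality, and it may be impossible below $\kappa$.
Second, elementarity produces witnesses inside $M_i$, whereas you need them outside $M_i$.
Third, from stage $\omega$ on, the recursion must avoid the union of infinitely many earlier witnesses, and avoiding finite sets does not give that.
As written, nothing produces even a second subgraph attached to infinitely many $v_n$.

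\textbf{The fallbacks and what is missing.} The pigeonhole over $2^{\aleph_0}$ subsets is unavailable, since $\kappa$ may be $\aleph_1$. It is also unnecessary. Suppose uncountably many disjoint connected subgraphs each have infinitely many neighbours in a fixed countable set $Z$, and you contract them. In a well-order witnessing countable colouring number, each contracted vertex would have to precede one of its neighbours in $Z$. Each vertex of $Z$ has only finitely many earlier neighbours, so there could be only countably many contracted vertices. Hence the minor already has uncountable colouring number.

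The dominated torso does not help either. Its construction presupposes finite adhesion: finiteness of each $A\in\sa''$ is used to choose $\eta_A$. It builds a minor containing $G_i$ rather than enlarging $G_i$ inside $G$, and it serves Claim~\ref{claim3}, not the decomposition.

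What is missing is a way to handle a bad component $D$ of $G-M_i$ directly. If $N_G(D)\subseteq Z$ for some countable $Z\in M_i$ (so $Z\subseteq M_i$), this is easy. $D$ is then a component of $G-Z$ that is not in $M_i$. So the family of components of $G-Z$ with infinite neighbourhood, which is an element of $M_i$, must be uncountable, and contracting its members gives the forbidden minor. The real difficulty is when $N_G(D)$ is not captured by any such set, e.g.\ when $M_i$ is a limit stage and $N_G(D)$ is spread cofinally through the chain. There you need an argument that never treats $\{v_n\}$ as a parameter available to $M_i$, and your proposal does not supply one.
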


To use Lemma \ref{lem98} in the proof of Theorem \ref{thm1}, the
key property is the following claim (Claim 4.1 in \cite{Pitz}) about
the graphs $G_{i}$ from Lemma \ref{lem98}.
\begin{claim}
\label{claim3}Every $V\of{G_{i}}$ is a countable union of dispersed
sets in $G$.
\end{claim}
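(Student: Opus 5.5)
The plan is to derive Claim~\ref{claim3} from Theorem~\ref{thm1} applied to a smaller graph, by induction on $\gk$. We may assume Theorem~\ref{thm1} holds for all connected graphs of size $<\gk$.

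We use two facts. First, if a graph $H$ has a normal spanning tree $T$, then each level of $T$ is dispersed in $H$. Indeed, a ray in $H$ has a normal ray of $T$ "below" it, so a finite down-closure in $T$ separates it from a given level. Since $T$ has countably many levels, $V\of H$ is then a countable union of dispersed sets. Second, we need an auxiliary graph $H_i$ on vertex set $V\of{G_i}$ with three properties:
\begin{enumerate}
\item $H_i$ is a minor of $G$. Then every minor of $H_i$ is a minor of $G$ and so has countable coloring number. Since $\abv{H_i}<\gk$ and $H_i$ is connected, the induction hypothesis gives $H_i$ a normal spanning tree.
\item $H_i\super G_i$.
\item $H_i$ is "torso-like": for every component $D$ of $G-G_i$, any two vertices of the finite adhesion set $N_G\of D$ are joined in $H_i$, or are at least suitably dominated.
\end{enumerate}

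Given such an $H_i$, we show that every $U\sub V\of{G_i}$ dispersed in $H_i$ is dispersed in $G$. Let $R$ be a ray in $G$.
\begin{itemize}
\item If $R$ has a tail inside a component $D$ of $G-G_i$, then the finite adhesion set $N_G\of D$ separates $R$ from $U$, because $U\sub V\of{G_i}$.
\item Otherwise $R$ meets $G_i$ infinitely often. Every segment of $R$ lying outside $G_i$ runs through some component $D$ between two vertices of $N_G\of D$, so we replace it by the corresponding torso edge of $H_i$. This projects $R$ onto a ray $R'$ of $H_i$, taking a subray if vertices repeat. Choose a finite $S\sub V\of{H_i}$ separating $R'$ from $U$ in $H_i$. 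Take $S\cup N$, where $N$ is the union of the finitely many adhesion sets of components that $R$ visits before its tail; one must check that finitely many suffice. Any $R$–$U$ path in $G$ avoiding this set projects in the same way to an $R'$–$U$ walk in $H_i$ avoiding $S$, which is a contradiction.
\end{itemize}

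The main obstacle is constructing $H_i$ so that it is genuinely a minor of $G$. Making every adhesion set a clique is not in general realizable as a minor, since one component $D$ may have to supply many disjoint connecting paths, and different components compete for the same vertices. My first attempt would be the "dominated torso" construction of Pitz. For each component $D$ with adhesion set $A=N_G\of D$, contract a suitable connected subgraph of $D$ onto one chosen vertex $a_D\in A$. This makes $a_D$ adjacent to all of $A$, which is realizable because the contracted sets for different $D$ are disjoint. Then $H_i$ is a minor of $G$ on vertex set $V\of{G_i}$, though only a star rather than a clique appears on each $A$.

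The projection argument must then be adapted: a detour of $R$ through $D$ from $x$ to $y$ becomes the path $x\,a_D\,y$ in $H_i$. This stays valid provided $a_D$ is added to the separator whenever needed. The delicate point is ensuring that $R'$ is still a ray, i.e.\ that the vertices $a_D$ are not revisited infinitely often. If some $a_D$ recurs infinitely often along the projection, then $R$ has infinitely many disjoint paths to $a_D$ in $G$. In that case $R$ is separated from $U$ directly, using that the neighbourhood of $U$ behaves well under the coloring number. That case analysis is where I expect the real work, and the correction, to lie.
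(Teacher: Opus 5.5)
Your outer structure is the intended one: by induction, a minor of $G$ of size $<\kappa$ containing $G_i$ has a normal spanning tree, its levels are dispersed there, and faithfulness transfers this to $G$. The gap is that your star construction of $H_i$ is not faithful, so the case analysis you defer cannot succeed. Let $G_i$ be the tree on vertices $a,r_k,u_k$ ($k\in\omega$) with edges $ar_k$ and $r_ku_k$. Let $G$ add vertices $d_k$ adjacent to $a,r_k,r_{k+1}$. Then $G_i$ is connected, induced and of finite adhesion. If every component $\{d_k\}$ is centred at $a$, you get $H_i=G_i$, which is a rayless tree. So $U=\{u_k:k\in\omega\}$ is dispersed in $H_i$ (it is a level of its normal spanning tree). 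But the disjoint edges $u_kr_k$ join $U$ to the ray $r_0d_0r_1d_1\ldots$ of $G$, so $U$ is not dispersed in $G$. This is exactly your case of a centre recurring infinitely often. Here $R$ is not separable from $U$, even though every minor of this countable $G$ has countable colouring number.

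The same defect appears for rays lying inside $G_i$. Take $G_i$ to be the ray $r_0r_1\ldots$ plus a vertex $a$ joined to $r_0$ and to all $u_k$, and let $d_k$ be adjacent to $a,u_k,r_k$. Centring at $a$ makes $\{a\}$ separate $U$ from every ray of $H_i$, whereas the paths $u_kd_kr_k$ are disjoint in $G$. In general, a $U$--$R$ path crossing a component $D'$ is forced through $a_{D'}$, and one separator vertex can be the centre of infinitely many such $D'$. You never say how $a_D$ is chosen, and for arbitrary choices the transfer is false.

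The missing idea is the actual dominated torso, which is not the construction you describe. Split the components according to whether their adhesion set $A$ carries finitely many components ($\mathscr{D}'$) or infinitely many ($\mathscr{D}''$).

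\begin{itemize}
\item Each $D\in\mathscr{D}'$ is contracted to a \emph{new} vertex $v_D$. You do not need $V(H_i)=V(G_i)$, only $|V(K)|=|V(G_i)|$. There are at most $|V(G_i)|$ such $D$, since $G_i$ is infinite and each finite $A$ carries finitely many of them. Also, a dispersed subset of $V(K)$ intersected with $V(G_i)$ stays dispersed.
\item When $A$ carries infinitely many components, those in $\mathscr{D}_A$ are contracted onto the vertices of $A$ via a surjection $\mathscr{D}_A\to A$. So $A$ \emph{does} become a clique; your objection to cliques only applies when a single component has to supply all the edges.
\end{itemize}

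Now a tendril projects to a locally finite tour that visits each vertex of $G_i$ at most once. Crossings of $\mathscr{D}''$-components become single clique edges, and only finitely many $v_D$ lie in a finite $F$.

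Even then the separator needs care. Your phrase ``adhesion sets of components $R$ visits before its tail'' is too vague at exactly the point where Pitz's original separator fails (Example~\ref{exa4}). The problem is that a $U$--$S$ path can end at a vertex of $S$ inside a $\mathscr{D}''$-component, and the projection deletes that vertex. The paper's fix is Lemma~\ref{lem421}. It uses $F_S=(F\cap V(G_i))\cup\bigcup N_G(D)$, where $D$ ranges over all $D$ with $v_D\in F$ together with all $D\in\mathscr{D}''$ from which $S$ exits directly into $F$. Alternatively, as the closing remark notes, it suffices to separate a tail of $S$.
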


Let $H$ be a subgraph of a graph $G$. An \emph{augmentation} of
$H$ in $G$ is a contraction minor $K$ of $G$ that contains $H$
as a subgraph. An augmentation $K$ is \emph{faithful} if every subset
$U\sub V_{H}$ that is dispersed in $K$ is also dispersed in $G$.
It is \emph{conservative} if $\abv{V_{K}}=\abv{V_{H}}$.

The proof of Claim \ref{claim3} in \cite{Pitz} involves the construction
of a \emph{dominated torso} $\hat{G_{i}}$ of $G_{i}$ that is a connected,
faithful and conservative augmentation of $G_{i}$ in $G$. Using
the general notation, the dominated torso $K$ of $H$ in $G$ is
defined as follows.

Let $\sa$ be the set of all adhesion sets of $H$ in $G$. Thus $A\in\sa$
if and only if $A=N_{G}\of D$ for some component $D$ of $G-H$.
For each $A\in\sa$ let $\sd_{A}$ be the set of components $D$ of
$G-H$ with $N_{G}\of D=A$. Let $\sa'$ be the set of all $A\in\sa$
such that $\sd_{A}$ is finite and let $\sa'':=\sa\sem\sa'$. Let
\[
\sd':=\bigcup_{A\in\sa'}\sd_{A}\qand\sd'':=\bigcup_{A\in\sa''}\sd_{A}.
\]

Let $\eta:\sd''\to V\of H$ be a function such that for every $A\in\sa''$
the restriction $\eta_{A}$ of $\eta$ to $\sd_{A}$ is a surjection
onto $A$. Since each $A\in\sa''$ is finite and the corresponding
$\sd_{A}$ is infinite, such a surjection $\eta_{A}$ exists.

Let $\set{v_{D}:D\in\sd'}$ be a set disjoint with $V\of G$ such
that $v_{D}\neq v_{D'}$ for $D\neq D'$, and let
\[
V\of K:=V\of H\cup\set{v_{D}:D\in\sd'}.
\]
Define $\vrh:V\of G\to V\of K$ by:
\[
\vrh\of u:=\begin{cases}
u & \text{if }u\in V\of H;\\
v_{D} & \text{if }u\in V\of D\text{ for some }D\in\sd';\\
\eta\of D & \text{if }u\in V\of D\text{ for some }D\in\sd''.
\end{cases}
\]
Let $K$ be the unique contraction minor of $G$ witnessed by $\vrh$.

Define a \emph{tendril} in $G$ to be a ray $S$ that has infinitely
many vertices in the subgraph $H$.

To prove that $K$ is a faithful augmentation of $H$ in $G$, given
a tendril $S$ in $G$, Pitz uses a canonical projection $S'$ of
$S$ onto $K$. We will call it a $K$-projection of $S$. It is defined
as follows.

Given a tendril $S=\cur{v_{n}}_{n\in\gw}$ in $G$, the $H$-\emph{masking}
of $S$ is the sequence $\cur{v_{n}'}_{n\in\gw}$, where each $v_{n}':=v_{n}$
if $v_{n}\in V\of H$, and $v_{n}':=D$ if $v_{n}\in V\of D$ for
some component $D$ of $G-H$.

The $K$-\emph{projection} of the tendril $S$ is the sequence $S':=\cur{w_{m}}_{m\in\gw}$
of vertices of $K$ obtained from the $H$-masking $\cur{v_{n}'}_{n\in\gw}$
of $S$ by:
\begin{enumerate}
\item Replacing each maximal subsequence of identical consecutive terms
$D\in\sd'$ with the single vertex $v_{D}\in V_{K}$.
\item Removing all terms $D\in\sd''$ from the sequence entirely.
\end{enumerate}
It is clear from the construction that $v_{D}$ is adjacent in $K$
to all vertices in $N_{G}\of D$ for every $D\in\sd'$. Moreover,
for every $A\in\sa''$ the induces subgraph $K\bof A$ of $K$ is
complete. Thus the $K$-projection $S'=\cur{w_{m}}_{m\in\gw}$ of
$S$ is a locally finite tour in $K$ (infinite walk visiting each
vertex only finitely many times).

Let $U\sub V\of H$ be dispersed in $K$. To show that $U$ is dispersed
in $G$, we need to show, in particular, that if $S$ is a tendril
in $G$, then $U$ is finitely separated from $S$.

Let $S'$ be the $K$-projection of $S$. Since $U$ is dispersed
in $K$ there is a finite set $F$ separating $U$ from $S'$ in $K$.
Pitz in \cite{Pitz} claims that the set
\[
X:=\cur{F\cap V\of H}\cup\bigcup\set{N_{G}\of D:D\in\sd',\,v_{D}\in F}
\]
separates $U$ from $S$ in $G$.

Consider the following example, which demonstrates that this claim
may fail.
\begin{example}
\label{exa4}Let $G$ be a graph whose vertex set it the disjoint
union $X\cup Y\cup Z$, where $X,Y$ are countably infinite sets enumerated
as $X:=\set{x_{j}:j\in\gw}$, $Y:=\set{y_{j}:j\in\gw}$ and $Z$ is
enumerated as $Z:=\set{z_{\ga}:\ga\in\gw_{1}}$. The edge set of $G$
consists of the edges $x_{j}x_{j+1}$, $x_{j}y_{j}$, $x_{j+1}y_{j}$,
for each $j\in\gw$ and the edges $x_{1}z_{\ga}$, $x_{2}z_{\ga}$
and $x_{3}z_{\ga}$ for each $\ga\in\gw_{1}$. Let $H:=G\bof X$.
The subgraph $H$ has finite adhesion in $G$. Moreover, $\sd'=\set{D_{i}':i\in\gw}$,
where each component $D_{i}'$ has a single vertex $y_{i}$ and $\sd''=\set{D_{\ga}'':\ga\in\gw_{1}}$,
where each $D_{\ga}''$ has a single vertex $z_{\ga}$. The minor
$K$ has vertex set $X\cup\set{v_{D_{i}'}:i\in\gw}$ with edges $x_{i}x_{i+1}$,
$x_{i}v_{D_{i}'}$ and $x_{i+1}v_{D_{i}'}$ for each $i\in\gw$ and
the edge $x_{1}x_{3}$.

Let $S$ be the ray 
\[
\cur{x_{2},z_{0},x_{3},y_{3},x_{4},y_{4},x_{5},\ds,x_{n},y_{n},x_{n+1},y_{n+1},\ds}
\]
in $G$. The $K$-projection of $S$ is the sequence $S':=\cur{x_{2},x_{3},v_{D_{3}'},x_{4},v_{D_{4}'},\ds}$.
Let $U:=\set{x_{0}}$ and $F:=\set{x_{2},x_{3}}$. Then $F$ separates
$U$ from $S'$ (see Figure \ref{fig1-1}), 
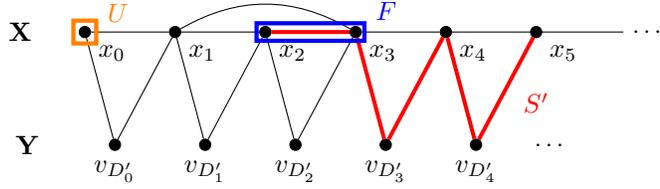
\begin{figure}
\begin{tikzpicture}[
    vertex/.style={circle, draw, fill=black, inner sep=1.5pt},
    label_node/.style={font=\small},
    group/.style={draw, dashed, rounded corners, inner sep=10pt}
]

    \node[vertex, label=below right:$x_0$] (x0) at (0,0) {};
    \node[vertex, label=below right:$x_1$] (x1) at (1.2,0) {};
    \node[vertex, label=below right:$x_2$] (x2) at (2.4,0) {};
    \node[vertex, label=below right:$x_3$] (x3) at (3.6,0) {};
    \node[vertex, label=below right:$x_4$] (x4) at (4.8,0) {};
    \node[vertex, label=below right:$x_5$] (x5) at (6,0) {};
    \node[label_node] (x_dots) at (7.5,0) {$\dots$};
    \node[left=0.5cm of x0] {\textbf{X}};

	\node[vertex, label=below:$v_{D_0'}$] (y0) at (.4,-1.5) {};
	\node[vertex, label=below:$v_{D_1'}$] (y1) at (1.6,-1.5) {};
	\node[vertex, label=below:$v_{D_2'}$] (y2) at (2.8,-1.5) {};
    \node[vertex, label=below:$v_{D_3'}$] (y3) at (4,-1.5) {};
    \node[vertex, label=below:$v_{D_4'}$] (y4) at (5.2,-1.5) {};
    \node[label_node] (y_dots) at (6.2,-1.5) {$\dots$};
    \node[left=.8cm of y0] {\textbf{Y}};

    \draw (x0) -- (x1) -- (x2) -- (x3) -- (x4) -- (x_dots);
    
	\draw (x1) to [bend left] (x3);
    \draw[line width=1.5pt, red] (x2) -- (x3);
    \draw[line width=1.5pt, red] (x3) -- (y3);
    \draw[line width=1.5pt, red] (x4) -- (y3);
    \draw[line width=1.5pt, red] (x4) -- (y4);
    \draw[line width=1.5pt, red] (x5) -- (y4);
\node[anchor=south] at (6,-1.2) {\textcolor{red}{$S'$}};

    \draw (x0) -- (y0);
    \draw (x1) -- (y0);
    \draw (x1) -- (y1);
    \draw (x2) -- (y1);
    \draw (x2) -- (y2);
    \draw (x3) -- (y2);

    \node[draw=blue, ultra thick, fit=(x2) (x3), inner sep=1pt, label=3:\textcolor{blue}{$F$}] {};
	\node[draw=orange, ultra thick, fit=(x0), inner sep=2pt, label=3:\textcolor{orange}{$U$}] {};
\end{tikzpicture}

\caption{$F$ separates $U$ from $S'$ in $K$}\label{fig1-1}
\end{figure}
 but $X=F$ does not separate $U$ from $S$ since $\cur{x_{0},x_{1},z_{0}}$
is a path joining $x_{0}\in U$ and $z_{0}\in S$ (see Figure \ref{fig1}).
\begin{figure}
\begin{centering}
\begin{tikzpicture}[
    vertex/.style={circle, draw, fill=black, inner sep=1.5pt},
    label_node/.style={font=\small},
    group/.style={draw, dashed, rounded corners, inner sep=10pt}
]

    \node[vertex, label=below right:$x_0$] (x0) at (0,0) {};
    \node[vertex, label=below right:$x_1$] (x1) at (1.2,0) {};
    \node[vertex, label=below right:$x_2$] (x2) at (2.4,0) {};
    \node[vertex, label=below right:$x_3$] (x3) at (3.6,0) {};
    \node[vertex, label=below right:$x_4$] (x4) at (4.8,0) {};
    \node[vertex, label=below right:$x_5$] (x5) at (6,0) {};
    \node[label_node] (x_dots) at (7.5,0) {$\dots$};
    \node[left=0.5cm of x0] {\textbf{X}};

	\node[vertex, label=below:$y_0$] (y0) at (.4,-1.5) {};
	\node[vertex, label=below:$y_1$] (y1) at (1.6,-1.5) {};
	\node[vertex, label=below:$y_2$] (y2) at (2.8,-1.5) {};
    \node[vertex, label=below:$y_3$] (y3) at (4,-1.5) {};
    \node[vertex, label=below:$y_4$] (y4) at (5.2,-1.5) {};
    \node[label_node] (y_dots) at (6.2,-1.5) {$\dots$};
    \node[left=.8cm of y0] {\textbf{Y}};

    \node[vertex, label=above:$z_0$] (z0) at (1.6,1.5) {};
	\node[vertex, label=above:$z_1$] (z1) at (2.8,1.5) {};
	\node[vertex, label=above:$z_2$] (z2) at (4,1.5) {};
    \node[label_node] (z_dots) at (5,1.5) {$\dots$};
    \node[left=1cm of z0] {\textbf{Z}};

    \draw (x0) -- (x1) -- (x2) -- (x3) -- (x4) -- (x_dots);
    
    \draw[line width=1.5pt, red] (x2) -- (z0);
    \draw[line width=1.5pt, red] (x3) -- (z0);
    \draw[line width=1.5pt, red] (x3) -- (y3);
    \draw[line width=1.5pt, red] (x4) -- (y3);
    \draw[line width=1.5pt, red] (x4) -- (y4);
    \draw[line width=1.5pt, red] (x5) -- (y4);
\node[anchor=south] at (6.5,-1.2) {\textcolor{red}{tendril $S$}};
    \draw (x2) -- (z0);
    \draw (x3) -- (z0);
    \draw (x1) -- (z1);
    \draw (x2) -- (z1);
    \draw (x3) -- (z1);
    \draw (x1) -- (z2);
    \draw (x2) -- (z2);
    \draw (x3) -- (z2);

    \draw (x0) -- (y0);
    \draw (x1) -- (y0);
    \draw (x1) -- (y1);
    \draw (x2) -- (y1);
    \draw (x2) -- (y2);
    \draw (x3) -- (y2);
    \draw (x3) -- (y3);
    \draw (x4) -- (y3);
    \draw (x4) -- (y4);
    \draw (x5) -- (y4);

    \node[draw=blue, ultra thick, fit=(x2) (x3), inner sep=1pt, label=3:\textcolor{blue}{$X$}] {};

    \draw[line width=1.5pt, green!60!black] (x0) -- (x1) -- (z0);
    \node[anchor=south] at (-.2,.5) {\textcolor{green!60!black}{path from $U$ to $S$}};
	\node[draw=orange, ultra thick, fit=(x0), inner sep=2pt, label=250:\textcolor{orange}{$U$}] {};

\end{tikzpicture}
\par\end{centering}
\caption{$X$ does not separated $U$ from $S$ in $G$}\label{fig1}
\end{figure}
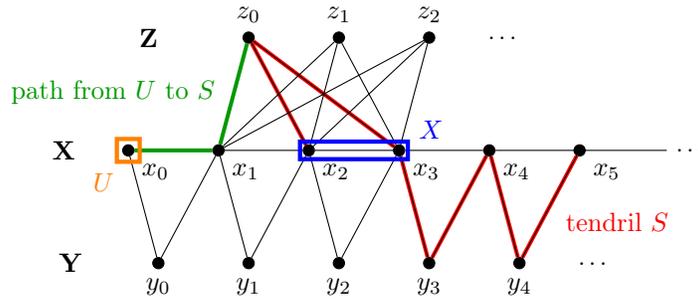
\end{example}

The construction of dominated torsos in \cite{Pitz} provides an implicit
proof of the following Augmentation Lemma. However, the part of this
proof that demonstrates faithfulness is not quite correct (see Example
\ref{exa4}). The purpose of this note is to provide a corrected proof 
\begin{lem}
[Augmentation Lemma]\label{lem99}Let $G$ be an infinite connected
graph and $H$ be an infinite connected induced subgraph with finite
adhesion in $G$. Then there exists a connected, faithful, and conservative
augmentation $K$ of $H$ in $G$.
\end{lem}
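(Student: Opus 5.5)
Take $K$ to be the dominated torso of $H$ in $G$ exactly as constructed above. The plan keeps the construction and repairs only the separator used in the proof of faithfulness.

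\textbf{Connectedness and conservativeness.} These are as in \cite{Pitz}. $K$ is a contraction minor of $G$ that contains $H$ as a subgraph. Each branch set is connected: for $D\in\sd''$, the set $\set{\eta\of D}\cup\bigcup\set{V\of D:\eta\of D=\ldots}$ is connected because $\eta\of D\in N_G\of D$. Since $H$ is connected and every $v_D$ is adjacent in $K$ to $N_G\of D\neq\emp$, $K$ is connected. For conservativeness, note that every $D\in\sd'$ has finite neighbourhood $N_G\of D\sub V\of H$. There are at most $\abv{V\of H}$ finite subsets of the infinite set $V\of H$, and each $\sd_A$ with $A\in\sa'$ is finite. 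Hence $\abv{\sd'}\le\abv{V\of H}$, and so $\abv{V\of K}=\abv{V\of H}$.

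\textbf{Faithfulness, rays that are not tendrils.} Let $U\sub V\of H$ be dispersed in $K$ and let $R$ be a ray in $G$. If $R$ is not a tendril, a tail of $R$ lies in a single component $D$ of $G-H$. Since $U\sub V\of H$, the finite set $N_G\of D$ separates that tail, and hence $R$ up to finitely many vertices, from $U$. We then add the finitely many vertices of the initial segment to the separator.

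\textbf{Faithfulness, tendrils.} Let $S$ be a tendril with $K$-projection $S'$, and let $F$ be finite and separate $U$ from $S'$ in $K$. Let $C_U$ be the union of the components of $K-F$ meeting $U$. Example \ref{exa4} shows what goes wrong: a segment of $S$ inside some $D\in\sd''$ is erased in $S'$, yet $N_G\of D$ may reach $C_U$. The fix is to add these neighbourhoods to the separator. Put
\[
X:=\cur{F\cap V\of H}\cup\bigcup\set{N_{G}\of D:D\in\sd',\,v_{D}\in F}\cup\bigcup\set{N_G\of D:D\in\sd_0},
\]
where $\sd_0$ consists of the component containing the initial segment of $S$ before its first vertex in $H$ (if any), together with all $D\in\sd''$ such that $S$ meets $D$ and $N_G\of D\cap V\of{C_U}\neq\emp$.

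The main obstacle is to show that $\sd_0$ is finite. Suppose $D\in\sd''$ is met by $S$ at a maximal segment other than the initial one. That segment is bordered on $S$ by vertices $a,b\in A:=N_G\of D$, and $a,b$ appear on $S'$. Since $K\bof A$ is complete, if $A$ meets $C_U$ then $A\sub V\of{C_U}\cup F$. On the other hand $S'$ avoids $C_U$, so $a\in F$. Now $S$ visits each vertex of the finite set $F$ only finitely often, and each visit borders at most two segments of $S$ in $G-H$. Hence only finitely many $D$ qualify, and $X$ is finite.

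\textbf{$X$ separates $U$ from $S$.} Suppose $P$ is a $U$–$S$ path in $G$ avoiding $X$, and shorten $P$ so that only its last vertex lies on $S$. Project $P$ to $K$ by the following rules.
\begin{enumerate}
\item Keep the vertices of $H$ on $P$; none of them lies in $F$.
\item Replace each maximal segment of $P$ inside some $D\in\sd'$ by $v_D$. Here $v_D\notin F$, because otherwise $P$ would meet $N_G\of D\sub X$.
\item Shortcut each interior segment of $P$ inside some $D\in\sd''$ using the edge of the clique $K\bof{N_G\of D}$ between its two bordering vertices.
\end{enumerate}
Consider the last vertex $p$ of $P$. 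If $p\in V\of H$, or $p$ lies in some $D\in\sd'$, then $p$ (respectively $v_D$) lies on $S'$. This gives a walk in $K-F$ from $U$ to $S'$, a contradiction. If $p$ lies in some $D\in\sd''$, or in the initial component, then the vertex of $P$ preceding its entry into $D$ lies in $N_G\of D$. The projected walk up to that vertex avoids $F$, so that vertex is in $C_U$ and $D\in\sd_0$. But then $P$ meets $N_G\of D\sub X$, again a contradiction. Therefore $X$ separates $U$ from $S$, and $U$ is dispersed in $G$.
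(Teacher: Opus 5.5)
Your proof is correct and follows the paper's overall strategy. You keep the dominated torso, enlarge Pitz's separator by the adhesion sets of finitely many components of $\sd''$ that $S$ passes through, and turn a $U$--$S$ path avoiding the separator into a walk in $K-F$ using the cliques $K\bof{A}$, $A\in\sa''$.

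The difference lies in which components you add. The paper takes $\hat{\sd}''$, the $D\in\sd''$ from which $S$ steps directly into a vertex of $F$. This choice depends only on $S$ and $F$, and finiteness is trivial: $\abv{\hat{\sd}''}\le\abv{F}$, since each vertex of $S$ has a unique predecessor. The paper then uses the clique property in the separation step: it extends the projected walk by the edge $x_qv_s$ to the exit vertex $v_s$ of $S$ from $D$, and cases on whether $v_s\in F$.

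You instead add the components met by $S$ whose adhesion set meets the $U$-side $C_U$ of $K-F$. Your separator then depends on $U$, and the clique property moves into the finiteness proof: a vertex bordering such a segment lies on $S'$, hence outside $C_U$, hence in $F$. The separation step then follows directly from the definition.

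Running your finiteness argument with the exit vertex instead of the entry vertex shows two things. Every $\sd''$-component you add already belongs to $\hat{\sd}''$, and your special treatment of the initial component is unnecessary. You also supply the connectedness and conservativeness arguments that the paper only cites from Pitz.
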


\section{Proof of the Augmentation Lemma.}

Let $G$ be an infinite connected graph and $H$ be an infinite connected
induced subgraph with finite adhesion in $G$. We define $K$ as the
dominated torso of $H$ in $G$ as described above. The minor $K$
is clearly connected and as proved in \cite{Pitz} it is conservative.
It remains to show that $K$ is a faithful augmentation.

Recall that $\sa$ is the set of all adhesion sets of $H$ in $G$,
that $\sa'$ is the set of all $A\in\sa$ such that $\sd_{A}$ is
finite and $\sa'':=\sa\sem\sa'$. Moreover,
\[
\sd'=\bigcup_{A\in\sa'}\sd_{A}\qand\sd''=\bigcup_{A\in\sa''}\sd_{A}.
\]

Suppose $F\sub V_{K}$ and $S=\cur{v_{n}}_{n\in\gw}$ is a tendril
in $G$. We define the subset $F_{S}\sub V\of H$ as follows. Let
$\cur{v_{n}'}_{n\in\gw}$ be the $H$-masking of $S$. Define the
auxiliary sets of components:
\[
\hat{\sd}':=\set{D\in\sd':v_{D}\in F}
\]
and
\[
\hat{\sd}'':=\set{D\in\sd'':v_{n}'=D\text{ and }v_{n+1}\in F\text{ for some }n\in\gw}.
\]
Note that $\hat{\sd}'$ is independent of $S$, but $\hat{\sd}''$
depends on $S$. Let $\hat{\sd}:=\hat{\sd}'\cup\hat{\sd}''$ and
\[
F_{S}:=\cur{F\cap V\of H}\cup\bigcup_{D\in\hat{\sd}}N_{G}\of D.
\]
We call $F_{S}$ the $S$-\emph{modification} of $F$. It is clear
that if $F$ is finite, then $F_{S}$ is also finite.

The difference between the $F_{S}$-modification and the set $X$
used by Pitz in his proof is the inclusion of the family $\hat{\sd}''$
in the union used to define $F_{S}$, while in the definition of $X$
only $\hat{\sd}'$ was used.
\begin{lem}
\label{lem421}Let $S$ be a tendril in $G$ with $K$-projection
$S':=\cur{w_{m}}_{m\in\gw}$. If a set $F\sub V_{K}$ separates a
subset $U\sub V_{H}$ from the set $W:=\set{w_{m}:m\in\gw}$ in $K$,
then the modification $F_{S}$ separates $U$ from $V_{S}$ in $G$.
\end{lem}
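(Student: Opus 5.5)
We argue by contradiction. Suppose there is a path $P$ in $G-F_S$ from some $u\in U$ to some vertex $s\in V_S$; we may assume $P$ meets $V_S$ only in its last vertex $s$. We will build from $P$ a walk in $K-F$ from $u$ to a vertex of $W$, contradicting the hypothesis. (Note $u\notin F$, since $u\in V_H$ and $F\cap V_H\sub F_S$.)

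\emph{Step 1: projecting $P$.} Apply $\vrh$ to $P$. Vertices of $P$ in $H$ stay put and are not in $F$, because $F\cap V_H\sub F_S$. A maximal subpath of $P$ inside some $D\in\sd'$ is entered and left through vertices of $N_G(D)$. If $v_D\in F$, then $D\in\hat{\sd}'$, so $N_G(D)\sub F_S$. Hence $P$ either never leaves $D$ after reaching it, which forces $s\in V(D)$, or $P$ would meet $N_G(D)$. Starting from $u\in V_H$, $P$ must enter $D$ through $N_G(D)$, which is impossible. So every $D\in\sd'$ that $P$ visits has $v_D\notin F$, and we replace that subpath by $v_D$. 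A maximal subpath inside some $D\in\sd''$ goes between two vertices of $A=N_G(D)$; since $K[A]$ is complete, we replace it by an edge, or by nothing. The result is a walk $P'$ in $K-F$ starting at $u$.

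\emph{Step 2: the endpoint.} We must show that $P'$ ends at, or can be extended within $K-F$ to, a vertex of $W$. Let $s=v_n$.
\begin{itemize}
\item If $s\in V_H$, then $s$ itself is a term of $S'$, so $s\in W$.
\item If $s\in V(D)$ with $D\in\sd'$, then $v_D\in W$ and $P'$ ends at $v_D$.
\item The main obstacle is $s\in V(D)$ with $D\in\sd''$, because $D$ is deleted in $S'$.
\end{itemize}

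For the third case, $P'$ ends at the last vertex $a\in A=N_G(D)$ that $P$ visits before entering $D$. Such an $a$ exists because $u\notin V(D)$. Since $H$ contains infinitely many vertices of $S$, the ray $S$ leaves $D$ after $v_n$: there is a first index $m>n$ with $v_m\notin V(D)$, and then $v_m\in A\sub V_H$. This $v_m$ is a term of $S'$, so $v_m\in W$. If $v_m\notin F$, extend $P'$ by the edge $a v_m$ of $K[A]$, or by nothing if $a=v_m$; this reaches $W$ in $K-F$. If $v_m\in F$, take $n'=m-1$: then $v'_{n'}=D$ and $v_{n'+1}\in F$, so $D\in\hat{\sd}''$ and $A\sub F_S$. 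But $a\in A$ lies on $P$, contradicting $P\cap F_S=\emptyset$. This is exactly the case the added family $\hat{\sd}''$ is designed to handle, as Example \ref{exa4} shows.

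\emph{Step 3: conclusion.} In every case we obtain a $u$–$W$ walk in $K-F$, hence a $u$–$W$ path avoiding $F$. This contradicts the assumption that $F$ separates $U$ from $W$. So $F_S$ separates $U$ from $V_S$ in $G$.

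The only delicate checks are that the walk stays out of $F$ and that every $w\in W$ we reach genuinely appears as a term of $S'$. The second holds because vertices of $S$ in $H$, and the vertices $v_D$ for $D\in\sd'$ visited by $S$, all survive into $S'$.
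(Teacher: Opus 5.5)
Your proof is correct and takes essentially the same approach as the paper: you project the path to a walk in $K$, split on whether its endpoint lies in $H$, in some $D\in\sd'$, or in some $D\in\sd''$, and in the last case use completeness of $K\bof A$ to join the last $H$-vertex of $P$ to the exit vertex of $S$ from $D$, invoking $\hat{\sd}''$ when that exit vertex lies in $F$. The only difference is that you argue by contradiction (a path avoiding $F_S$ yields a $U$–$W$ walk in $K-F$), whereas the paper directly locates a vertex of $F_S$ on $P$; this is purely a change of presentation.
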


\begin{proof}
Let $P=\cur{u_{0},u_{1},\ds,u_{k}}$ be a path in $G$ joining from
vertex $u\in U$ to a vertex $v\in S$. Our goal is to show that $P$
has a vertex in $F_{S}$.

Let $\cur{u_{0}',u_{1}',\ds,u_{k}'}$ be the $H$-masking of $P$
and $\cur{x_{0},x_{1},\ds,x_{q}}$ be its $K$-projection (defined
similarly as for tendrils). Since $u_{0}\in V\of H$, we have $x_{0}=u_{0}'=u_{0}$.

First, consider the case where $u_{k}\in V_{H}$ or $u_{k}\in V_{D}$
for some $D\in\sd'$. If $u_{k}\in V_{H}$, then $x_{q}=u_{k}\in W$.
Otherwise, since $u_{k}$ is a vertex of $S$, the definition of $K$-projection
implies $x_{q}=v_{D}\in W$. Because $F$ separates $U$ from $W$
in $K$, and $\cur{x_{0},x_{1},\ds,x_{q}}$ is a walk in $K$ connecting
$u\in U$ to $x_{q}\in W$, there must be some index $i\in\set{0,1,\ds,q}$
such that $x_{i}\in F$. \smallskip

\noindent$\bigstar$ If $x_{i}\in V_{H}$, then by the definition
of the $S$-modification, $x_{i}\in F_{S}$, which satisfies the requirement.
\smallskip

\noindent$\bigstar$ If $x_{i}=v_{D}$ for some component $D\in\sd'$,
then $v_{D}\in F$ implies that $D\in\hat{\sd}'$. Since $x_{0}\in V_{H}$,
we know that $i\ge1$ and $x_{i-1}\in V_{H}$. Thus $x_{i-1}$ is
a vertex of $P$ that lies in $N_{G}\of D$. Since $D\in\hat{\sd}'$,
the adhesion set $N_{G}\of D$ is contained in $F_{S}$. Therefore,
$x_{i-1}\in F_{S}$, as required. \smallskip

Now assume that $u_{k}\in V\of D$ for some $D\in\sd''$. Then $A:=N_{G}\of D\in\sa''$.
Since $u_{0}\in V\of H$, there is the largest index $j\in\set{0,1,\ds,k}$
with $u_{j}\in V\of H$. Then $u_{j+1}'=u_{j+2}'=\ds=u_{k}'=D$ and
$x_{q}=u_{j}\in A$. Let $S=\cur{v_{n}}_{n\in\gw}$ and $\cur{v_{n}'}_{n\in\gw}$
be the $H$-masking of $S$. Since $u_{k}=v_{n}$ for some $n\in\gw$,
it follows that $v_{n}'=D$. Since $S$ is a tendril, there is a smallest
integer $s>n$ with $v_{s}\in V\of H$. Then $v_{s-1}\in V\of D$
so $v_{s}\in A$. 

Note the graph $K\bof A$ is complete. Since both $x_{q}$ and $v_{s}$
belong to $A$ and $A\in\sa''$, it follows that $x_{q}v_{s}\in E\of K$.
Then $\cur{x_{0},x_{1},\ds,x_{q},v_{s}}$ is a walk in $K$ connecting
$u\in U$ to $v_{s}\in W$.

Because $F$ separates $U$ from $W$ in $K$, if $v_{s}\nin F$,
then there is $i\in\set{0,1,\ds,q}$ such that $x_{i}\in F$. Then,
arguing as above, we find a vertex of the path $P$ that belongs to
$F_{S}$. Suppose $v_{s}\in F$. Then the definition of the auxiliary
set $\hat{\sd}''$ implies that $D\in\hat{\sd}''$ and consequently
$x_{q}\in F_{S}$.

In either case we find a vertex of $P$ that belongs to $F_{S}$ which
implies that $F_{S}$ separates $U$ from $S$ in $G$.
\end{proof}

\medskip

\noindent\textbf{Proof that $K$ is a faithful augmentation.} Assume
that $U\sub V\of H$ is dispersed in $K$. Let $S=\cur{v_{i}}_{i\in\gw}$
be a ray in $G$. To show that $U$ is dispersed in $G$, we must
find a finite subset of $V\of G$ that separates $U$ from $S$.

First, suppose there exists $n\in\gw$ such that $v_{i}\nin V\of H$
for all $i\ge n$. Then the tail $\set{v_{i}:i\ge n}$ of $S$ is
contained in a single component $D$ of $G-H$. The union of the finite
adhesion set $N_{G}\of D$ and the finite initial segment $\set{v_{i}:i<n}$
of $S$ separates $U$ from $S$.

If no such $n$ exists, then $S$ is a tendril in $G$. Let $\cur{w_{m}}_{m\in\gw}$
be the $K$-projection of $S$. Since $U$ is dispersed in $K$, there
is a finite set $F\sub V\of G$ that separates $U$ from $\set{w_{m}:m\in\gw}$
in $K$.

The $S$-modification $F_{S}$ is a finite subset of $V\of G$ that
separates $U$ from $S$ in $G$ by Lemma \ref{lem421}.
\begin{rem*}
Max Pitz has pointed out in personal conversation that his separator
\[
X:=\cur{F\cap V\of H}\cup\bigcup_{D\in\hat{\sd}'}N_{G}\of D
\]
defined as in \cite{Pitz}, separates the tail of $S$ after $X$
from $U$. That is good enough to yield Claim \ref{claim3} (Claim
4.2 in \cite{Pitz}).
\end{rem*}

\end{document}